\documentclass[a4paper,10pt]{article}

\usepackage{amsfonts,amssymb,amsmath}

\numberwithin{equation}{section}
\usepackage[labelsep=endash]{caption}

\usepackage[normalem]{ulem}

\usepackage{mdwlist}
\usepackage{tikz-cd}


\newtheorem{theo}{Theorem}[section]
\newtheorem{coro}[theo]{Corollary}
\newtheorem{lemm}[theo]{Lemma}
\newtheorem{prop}[theo]{Proposition}
\newtheorem{defi}[theo]{Definition}
\newtheorem{rema}[theo]{Remark}

\newtheorem{exams}[theo]{Examples}

\newenvironment{proof}{\noindent \textbf{{Proof.}} \sf}

\def\qed{\hfill $\diamond$ \bigskip}

\def\lim{\mathop{\rm lim}\nolimits}

\def\Ext{\mathsf{Ext}}
\def\Hom{\mathsf{Hom}}
\def\Tor{\mathsf{Tor}}

\def\Ker{\mathsf{Ker}}
\def\ker{\mathop{\rm Ker}\nolimits}
\def\Coker{\mathsf{Coker}}

\def\Im{\mathsf{Im}}

\begin{document}
\sf

\title{Split bounded extension algebras and Han's conjecture}
\author{Claude Cibils,  Marcelo Lanzilotta, Eduardo N. Marcos,\\and Andrea Solotar
\thanks{\footnotesize This work has been supported by the projects  UBACYT 20020130100533BA, PIP-CONICET 11220150100483CO, USP-COFECUB.
The third mentioned author was supported by the thematic project of FAPESP 2014/09310-5 and acknowledges support from the "Brazilian-French Network in Mathematics". The fourth mentioned author is a research member of CONICET (Argentina) and a Senior Associate at ICTP.}}

\date{}
\maketitle
\begin{abstract}
A main purpose of this paper is to prove that the class of finite dimensional algebras which verify Han's conjecture is closed under split bounded extensions.

\end{abstract}

\noindent 2010 MSC: 18G25, 16E40, 16E30, 18G15

\noindent \textbf{Keywords:} Hochschild, homology, relative, Han, quiver

\section{\sf Introduction}

Given a finite dimensional algebra $A$ over an algebraically closed field $k$, Han's conjecture relates two homological invariants associated to $A$:
its global dimension and its Hochschild homology. In the commutative case -- non necessarily finite dimensional but finitely generated -- the finiteness of the global dimension is equivalent to the fact that $A$ is geometrically regular \cite{AUSLANDERBUCHSBAUM,Se}. In general we are going to say that an algebra with finite global dimension is {\em smooth}.

On the other hand, we consider Hochschild homology of $A$. Let $A^e=A\otimes A^{op}$ be the enveloping algebra. Let us recall that given an $A$-bimodule $X$ -- or equivalently a left  or right
$A^e$-module --, the Hochschild homology of $A$ with coefficients in $X$ is
$H_*(A,X)=\Tor^{A^e}_*(A,X)$; it is functorial in both variables.

Han's conjecture \cite{HAN} states that for $A$ finite dimensional, $A$ is smooth if and only if $H_n(A,A)=0$ for $n>>0$. The direct implication is true.

Next we recall some previous results. Well before being formulated, Han's conjecture has been proved for commutative algebras which are finitely generated, which encompasses finite dimensional commutative algebras, see \cite{BACH,AVRAMOVVIGUE}. Y. Han proved the conjecture for monomial algebras in \cite{HAN}. P.A. Bergh and D. Madsen have shown that it holds in characteristic zero for graded finite dimensional local algebras, Koszul algebras and graded cellular algebras  \cite{BERGHMADSEN2009}. They have also obtained a confirmation of Han's conjecture in  \cite{BERGHMADSEN2017} for trivial extensions of several sorts of algebras, by proving that their Hochschild homology is non zero in large enough degrees. P.A. Bergh and K. Erdmann  proved in
\cite{BERGHERDMANN} that quantum complete intersections - at a non-root of unity - satisfy Han's conjecture, as well as A. Solotar and
M. Vigu\'{e}-Poirrier \cite{SOLOTARVIGUE} for a generalisation of quantum complete intersections and for a family of algebras which are in some
sense opposite to these last ones.
Later,  A. Solotar, M. Su\'{a}rez-Alvarez and Q. Vivas proved in \cite{SOLOTARSUAREZVIVAS} Han's conjecture for quantum generalized Weyl algebras
(out of a few exceptional cases). In \cite{CIBILSREDONDOSOLOTAR} null-square projective algebras extensions were considered, the present paper goes further in this direction.

Concerning the commutative case, it is worth to mention that in characteristic zero, in positive degrees $HH_n(A)$ has a decomposition,
 called Hodge decomposition, as a direct sum of subspaces, see for example \cite{GS, Ronco, Vigue}. One of them is the $n$-th exterior power of the $A$-module of K\"ahler differentials, $\Omega^n_{A|k}$ and another one is $D_n(A|k)$, the Andr\'e-Quillen homology of the commutative $k$-algebra $A$. When $A$ is smooth, in positive degrees $HH_n(A)= \Omega^n_{A|k}$ and the other summands annihilate.
In fact, the main condition for smoothness is that $D_n(A|k)=0$ for positive $n$ \cite{Iyengar}, and the Jacobi-Zariski long exact sequence
for Andr\'e-Quillen homology relating $D_n(A|k)$, $D_n(A|B)$ and $D_n(B|k)$ for any extension of algebras $k\subseteq B \subseteq A$ plays an
important role.

In the non commutative setting Andr\'e-Quillen homology does not exist, but A.~Kaygun has proved recently in \cite{KAYGUN,KAYGUNe}
the existence of a
Jacobi-Zariski long exact sequence starting in degree one for Hochschild homology for any extension of $k$-algebras
$B\subseteq A$, such that $A$ is $B$-flat. It relates the
ordinary Hochschild homologies of $A$ and $B$ with the relative Hochschild homology of $A$ with respect to $B$.
In this paper, with different hypotheses we also obtain a long exact sequence of
Jacobi-Zariski type for large enough degrees.

We consider split extension algebras in relation with Han's conjecture. By definition, a split extension algebra over a field $k$ is a $k$-algebra of the form $A=B\oplus M$, where $B$ is a subalgebra of $A$ and $M$ is a two-sided ideal of $A$.
As a consequence of our work, we prove that in some cases, adding or deleting arrows to a quiver -- even adding or deleting certain relations --
does not change the situation with respect to Han's conjecture,  see also \cite{CIBILSLANZILOTTAMARCOSSOLOTAR}. Indeed, these processes are special cases of split extension algebras, see Example \ref{examples split}.\ref{all}. In a subsequent work, conditions will be given for these operations to fit within the framework of the theory we provide in this paper.
\normalsize

Next we describe the contents of this article. In Section \ref{section rrbr}, in order to compute the relative Hochschild (co)homology introduced by G. Hochschild in \cite{HOCHSCHILD1956}, we construct a reduced relative bar resolution of a split extension algebra.
We use it particularly when $M$ is $B$-tensor nilpotent, that is if there exists $n$ such that $M^{\otimes_B n}=0$.

In Section \ref{nearly} we obtain a Jacobi-Zariski long exact sequence in the following situation.
A $B$-bimodule $M$ is called bounded if $M$ is $B$-tensor nilpotent, of finite projective dimension as $B$-bimodule and projective either as
left or as right $B$-module. A split bounded extension algebra is a split extension $A=B\oplus M$ where $M$ is bounded.
For these algebras we obtain a Jacobi-Zariski long exact sequence in large enough degrees. We set up techniques based on nearly exact sequences of complexes, see Definition
\ref{sequence nearly exact}.
Actually the relative  resolution of Section \ref{section rrbr} provides a nearly exact sequence, which in turn gives the required Jacobi-Zariski long exact sequence in large enough degrees.

In Section \ref{Han} we prove our main result: the class $\mathcal{H}$ of finite dimensional algebras  which verify Han's conjecture is closed under split bounded extensions. More precisely if $A=B\oplus M$ is such an extension, then $A\in\mathcal{H}$ if and only if $B\in\mathcal{H}$. We point out that this result does not depend on the associative structure of $M$, but on properties of its $B$-bimodule structure, see Definitions \ref{bounded} and \ref{split bounded extension}.The proofs make use of the Jacobi-Zariski long exact sequence, and of the reduced relative bar resolution.

\section{\sf A reduced relative bar resolution for split extension algebras}\label{section rrbr}

Let $B\subset A$ be an extension of algebras over a field $k$. In this context G. Hochschild introduced in \cite{HOCHSCHILD1956}
relative homological algebra, which   corresponds to consider the exact category of $A$-modules with respect to $B$-split short exact sequences, see \cite{QUILLEN,BUHLER}.  More precisely, an\emph{ induced module} is an $A$-module of the form $A\otimes_BM$, where $M$ is a left $B$-module.  An $A$-module $P$ is  \emph{relative projective} if
any $A$-morphism $X\to P$ which has a $B$-section has an $A$-section.
Equivalently, $P$ is relative projective if it is an $A$-direct summand of an induced module.
There are enough relative projectives since for any $A$-module $X$ the canonical $A$-map $A\otimes_B X\to X$ has a $B$-section.
Of course if $B=k$ we recover the ordinary definition, and if $B=A$ all modules are relative projective.

A relative  projective resolution of an $A$-module $X$ is a sequence
$$\cdots\stackrel{d}{\to}P_2\stackrel{d}{\to}P_1\stackrel{d}{\to}P_0\to X \to 0$$
where each $P_i$ is a relative projective $A$-module, the $d$'s are $A$-morphisms, $d^2=0$ and there exists a $B$-contracting homotopy, see \cite[p. 250]{HOCHSCHILD1956}.

Two relative projective resolutions of $X$ are homotopic and the functor $A\otimes_B -$ is exact,
so that for $X$ and $Y$ respectively right and  left $A$-modules, the functor $\Tor_*^{A|B}(X,Y)$ is well defined. For $X$ and $Y$ left $A$-modules, the functor $\Ext^*_{A|B}(X,Y)$ is well defined.

Consider the extension of enveloping algebras $B^e\subset A^e$. For $X$ an $A$-bimodule, the relative Hochschild homology and cohomology vector spaces are  defined in \cite{HOCHSCHILD1956} respectively as follows:
$$H_*(A|B,X)= \Tor_*^{A^e|B^e}(X,A)\mbox{\ \ and\ \ }H^*(A|B,X) = \Ext^*_{A^e|B^e}(A,X).$$

Observe that in \cite{HOCHSCHILD1956} those vector spaces are defined with respect to the extension $B\otimes A^{op}\subset A^e$. This turns out to be equivalent since the relative canonical resolution of $A$ is relative projective in both situations, and the canonical contracting homotopies agree.

Being derived functors, they can be computed using an arbitrary relative projective resolution. In particular these vector spaces are the homology and the cohomology of the following chains and cochains complexes $C_*(A|B,X)$ and $C^*(A|B,X)$:
$$\cdots \stackrel{b}{\to} X\otimes_{B^e} A^{\otimes_Bn}\stackrel{b}{\to}\cdots \stackrel{b}{\to} X\otimes_{B^e}A \stackrel{b}{\to} X_B\to 0,$$
$$0\to X^B\stackrel{b}{\to}\Hom_{B^e}(A,X)\stackrel{b}{\to}\cdots \stackrel{b}{\to}\Hom_{B^e}(A^{\otimes_Bn},X)\stackrel{b}{\to}\cdots$$
where $$X_B= X\otimes_{B^e}B= X/\langle bx-xb\rangle = H_0(B,X),$$
$$X^B=\Hom_{B^e}(B,X)=\{x\in X\mid bx=xb \mbox{ for all } b\in B\}= H^0(B,X),$$
and where the formulas for the boundaries and coboundaries are the ordinary ones.

\begin{defi}
An extension of algebras $B\subset A$ is \emph{split} if there is a morphism of algebras $\pi: A\to B$ which is a retraction to the inclusion, that is $\pi(b)=b$ for all $b\in B$.
\end{defi}

Clearly $B\subset A$ is split if and only if there exists a two-sided ideal $M$ of $A$ such that $A=B\oplus M$.

Next we provide some examples of split extensions. In the last example we add arrows to the quiver of a bound quiver algebra. Note that in relation to the finitistic dimension conjecture, E.L. Green, C. Psaroudakis and {\O}. Solberg \cite{GREENPSAROUDAKISSOLBERG} have considered the case of adding exactly one arrow, which leads to a trivial extension.

\begin{exams}\label{examples split}
\
\begin{enumerate}
\item
Let $B$ be an algebra, let $N$ be a $B$-bimodule and let $T$ be the tensor algebra $$T=T_B(N)=B\oplus\ N\ \oplus\ N\otimes_BN\ \oplus \cdots.$$  Let $T^{>i}=N^{\otimes_Bi+1}\ \oplus\ N^{\otimes_Bi+2} \oplus \cdots $

We have  $T=B\oplus T^{>0}$, that is $T$ is a split extension. Moreover, if $J\subset T^{>0}$ is a two-sided ideal of $T$, then $B\subset T/J$ is a split extension as well.
\item \label{quiver} Let $Q$ be a finite quiver, that is $Q=(Q_0,Q_1,s,t)$ where $Q_0$ and $Q_1$ are finite sets called respectively vertices and arrows, and $s,t:Q_0\to Q_1$ are maps called respectively source and target. Let $A=kQ/I$ be a bound quiver algebra, where $kQ$ is the path algebra of $Q$ and $I$ is an admissible two-sided ideal of $kQ$, see \cite{GABRIEL1972,GABRIEL1973,GABRIEL1980} and \cite{ASSEMSIMSONSKOWRONSKY,SCHIFFLER}. The extension $B=kQ_0\subset A$ is  split.

\item \label{all} Let $B=kQ/I$ be a bound quiver algebra, and let $F$ be a finite set of new arrows, that is $F$ is a finite set with two maps
$s,t:F\to Q_0$. Let $Q_F$ be the quiver with the same vertices than $Q$, while its arrows are $Q_1\sqcup F$.

         Let $B_F= kQ_F/\langle I\rangle_{kQ_F}$, where the denominator is the two-sided ideal of $kQ_F$ generated by $I$. It is easily proven that  $B_F= T_B(N)$ where
         \begin{equation}\label{N F}
         N=\bigoplus_{a\in F} Bt(a)\otimes s(a)B.
          \end{equation}

Let $J\subset B_F^{>0}$ be a two-sided ideal of $B_F$. The algebra  $$A=B_F/J = B\oplus (B_F^{>0}/J)$$ is also a split extension.

\end{enumerate}
\end{exams}

The first item of the next result is a generalisation of a reduced bar resolution obtained in \cite[Lemma 2.1]{CIBILS1990}.

\begin{theo}
Let $A=B\oplus M$ be a split extension of algebras.
\begin{enumerate}
\item There is a \emph{reduced relative bar resolution} of $A$ as $A$-bimodule
\begin{equation}\label{rrbr}\cdots\stackrel{d}{\to}A\otimes_BM^{\otimes_Bn}\otimes_BA\stackrel{d}{\to}\cdots\stackrel{d}{\to}A\otimes_BM\otimes_BA\stackrel{d}{\to}A\otimes_BA
\stackrel{d}{\to}A\to 0\end{equation}
where the formulas for the $d$'s are those of the ordinary bar resolution, see \cite{HOCHSCHILD1945,HOCHSCHILD1956}.
\suspend{enumerate}
In what follows the formulas for the (co)boundaries are the ordinary ones.
\resume{enumerate}
\item Let $X$ be an $A$-bimodule. The homology of the following chain complex $C_*^M(A|B,X)$ is $H_*(A|B,X)$.
\begin{equation}{ C_*^M(A|B,X) }: \ \  \cdots  \stackrel{b}{\to} X\otimes_{B^e} M^{\otimes_Bn}\stackrel{b}{\to}\cdots \stackrel{b}{\to} X\otimes_{B^e}M \stackrel{b}{\to} X_B\to 0\end{equation}
\item The cohomology of the following cochain complex $C_M^*(A|B,X)$ is \\$H^*(A|B,X)$.
\begin{equation}0\to X^B\stackrel{b}{\to}\Hom_{B^e}(M,X)\stackrel{b}{\to}\cdots \stackrel{b}{\to}\Hom_{B^e}(M^{\otimes_Bn},X)\stackrel{b}{\to}\cdots\end{equation}

\end{enumerate}
\end{theo}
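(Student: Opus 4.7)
My plan is to prove (1) by verifying the three hallmarks of a relative projective resolution: each $P_n=A\otimes_B M^{\otimes_Bn}\otimes_B A$ is relative projective over $A^e$ with respect to $B^e$, the bar differential is well-defined and squares to zero, and there is a $B^e$-linear contracting homotopy. Parts (2) and (3) then follow formally: applying $X\otimes_{A^e}(-)$ and $\Hom_{A^e}(-,X)$ to the resolution (\ref{rrbr}), and using the standard identity $A\otimes_B V\otimes_B A\cong A^e\otimes_{B^e}V$ for any $B$-bimodule $V$ together with the restriction/induction adjunction, yields $X\otimes_{A^e}(A\otimes_B V\otimes_B A)\cong X\otimes_{B^e}V$ and $\Hom_{A^e}(A\otimes_B V\otimes_B A,X)\cong\Hom_{B^e}(V,X)$, which are exactly the terms of $C^M_*(A|B,X)$ and $C_M^*(A|B,X)$.

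Relative projectivity is immediate from the induced-module description $P_n\cong A^e\otimes_{B^e}M^{\otimes_Bn}$. The bar differential is well-defined because the only interior products it creates are $m_im_{i+1}$ with $m_i,m_{i+1}\in M$, and these remain in $M$ since $M$ is a two-sided ideal of $A$; the extreme products $a_0m_1$ and $m_na_{n+1}$ land in $A$, the outer slot of $P_{n-1}$. The identity $d^2=0$ is then the standard bar-complex cancellation driven only by associativity, and carries over unchanged.

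The core step is the contracting homotopy. Using the splitting $A=B\oplus M$ of $B$-bimodules, decompose each $a_0\in A$ as $a_0=b_0+\mu_0$ with $b_0\in B$, $\mu_0\in M$, and set
$$s_n(a_0\otimes m_1\otimes\cdots\otimes m_n\otimes a_{n+1})=1\otimes\mu_0\otimes m_1\otimes\cdots\otimes m_n\otimes a_{n+1},$$
augmented by $s_{-1}(a)=1\otimes a$. Since the $M$-part of $a_0b$ is $\mu_0b$ for $b\in B$, the formula respects the defining relations of $\otimes_B$, so $s_n$ is well-defined and $B^e$-linear. Unwinding $ds_n+s_{n-1}d$ produces pairwise cancellation of all interior face terms; the pieces that survive combine through the identity $a_0m_1-\mu_0m_1=b_0m_1$, and the tensor relation $1\otimes b_0m_1\otimes\cdots=b_0\otimes m_1\otimes\cdots$ reconstitutes $a_0\otimes m_1\otimes\cdots\otimes m_n\otimes a_{n+1}$. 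The main obstacle is not conceptual but rather the bookkeeping of this telescoping: one must track how the splitting at the leftmost slot interacts with each face map, and confirm that precisely the contributions needed to reassemble $a_0=b_0+\mu_0$ are the ones that survive.
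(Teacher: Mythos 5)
Your proposal is correct and follows essentially the same route as the paper: you use the identical contracting homotopy $t(a_0\otimes m_1\otimes\cdots\otimes a_{n+1})=1\otimes (a_0)_M\otimes m_1\otimes\cdots\otimes a_{n+1}$ (the paper merely asserts without details that it is ``easily proven'' to work) and the same canonical isomorphism $X\otimes_{A^e}(A\otimes_B V\otimes_B A)\cong X\otimes_{B^e}V$ for parts (2) and (3). The one point you make explicit that the paper leaves tacit is why the bar differential is well-defined on these terms, namely that interior products $m_im_{i+1}$ land back in $M$ because $M$ is a two-sided ideal; your telescoping computation $ds+sd=\mathrm{id}$, pivoting on $a_0m_1-(a_0)_Mm_1=(a_0)_Bm_1$ and $1\otimes(a_0)_Bm_1\otimes\cdots=(a_0)_B\otimes m_1\otimes\cdots$, is exactly the verification the paper omits.
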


\begin{proof}
By construction, the bimodules involved in the first item are induced bimodules, hence they are relative projective. Let $a=a_B+a_M$ be the decomposition of  $a\in A=B\oplus M$, and let
$$t(a_1\otimes m_2\otimes\cdots\otimes m_{n+1}\otimes a_{n+2})=1\otimes (a_1)_M\otimes m_2\otimes\cdots\otimes m_{n+1}\otimes a_{n+2}.$$
It is easily proven that $t$ is a well defined $B^e$-morphism, which is a contracting homotopy.

The second item is obtained by applying the functor $X\otimes_{A^e}-$ to the resolution, and the following canonical isomorphism where $Z$ is a  $B$-bimodule
$$X\otimes_{A^e}\left(A\otimes_BZ\otimes_BA\right)= X\otimes_{B^e}Z.$$
The last item is obtained analogously.\qed
\end{proof}

\begin{rema}\label{contraction homotopy A}
For later use, we record that the contracting homotopy $t$ in the previous proof is also a right $A$-module map.
\end{rema}

A $B$-bimodule $M$ is \emph{$B$-tensor nilpotent} if there exists $n$ such that $M^{\otimes_B n}=0$.  Moreover $n$ is the index of $B$-tensor nilpotency if $M^{\otimes_B n-1}\neq 0$. For instance, let $kQ$ be the path algebra of a quiver $Q$.  The $kQ_0$-bimodule $\langle Q_1\rangle\subset kQ$ is $kQ_0$-tensor nilpotent if and only if there is no oriented cycle  in $Q$.

\begin{coro}\label{tensor nilpotent H vanish in large degrees}
Let $A=B\oplus M$ be a split extension, where $M$ is $B$-tensor nilpotent of index $n$. Let $X$ be an $A$-bimodule. For $*\geq n$ we have
$$H_*(A|B,X)=0\mbox{\ \ and\ \ }H^*(A|B,X) =0.$$
\end{coro}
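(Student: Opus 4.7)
The plan is to read the vanishing off directly from the reduced relative bar resolution constructed in the preceding theorem, without reproving anything at the resolution level.

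First I would recall the hypothesis: $M$ is $B$-tensor nilpotent of index $n$, which by the definition given right before the corollary means $M^{\otimes_B n}=0$ (with $n$ minimal). Since $M^{\otimes_B k}=M^{\otimes_B n}\otimes_B M^{\otimes_B (k-n)}$ for $k\geq n$, this forces $M^{\otimes_B k}=0$ for every $k\geq n$.

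Next I would invoke items (2) and (3) of the preceding theorem, which identify $H_*(A|B,X)$ and $H^*(A|B,X)$ with the (co)homology of the reduced complexes $C_*^M(A|B,X)$ and $C_M^*(A|B,X)$. In degree $k\geq n$ these complexes have chain, respectively cochain, groups
\[
X\otimes_{B^e} M^{\otimes_B k}\qquad\text{and}\qquad \Hom_{B^e}(M^{\otimes_B k},X),
\]
and by the previous paragraph both of these vector spaces are zero. A complex that vanishes in every degree $\geq n$ has vanishing (co)homology in every degree $\geq n$, which gives the two assertions.

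There is essentially no obstacle here: all the work has already been done in constructing the reduced relative bar resolution, whose whole point is that the bars are filled with copies of $M$ rather than of $A$. Once the resolution is in place, $B$-tensor nilpotency of $M$ is visibly a vanishing hypothesis on the complex itself. The only small point to keep honest is that the chain and cochain complexes really do involve $M^{\otimes_B k}$ and not $A\otimes_B M^{\otimes_B k}\otimes_B A$ directly; this is exactly what the canonical identification $X\otimes_{A^e}(A\otimes_B Z\otimes_B A)\cong X\otimes_{B^e} Z$ used in the proof of the theorem delivers, and analogously on the $\Hom$ side.
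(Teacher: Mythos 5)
Your argument is correct and is exactly the intended one: the paper leaves the corollary without an explicit proof precisely because, once items (2) and (3) of the preceding theorem express $H_*(A|B,X)$ and $H^*(A|B,X)$ as the (co)homology of complexes whose degree-$k$ terms are $X\otimes_{B^e}M^{\otimes_B k}$ and $\Hom_{B^e}(M^{\otimes_B k},X)$, tensor nilpotency of index $n$ kills those terms for all $k\ge n$. Your remark that $M^{\otimes_B k}=0$ for all $k\ge n$ (not just $k=n$) and your note about the canonical identification $X\otimes_{A^e}(A\otimes_B Z\otimes_B A)\cong X\otimes_{B^e}Z$ are exactly the small points worth keeping honest.
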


Let $C_*(A,X)$ be the ordinary chain complex
\begin{equation} C_*(A,X):  \cdots \stackrel{b}{\to} X\otimes A^{\otimes n}\stackrel{b}{\to}\cdots \stackrel{b}{\to} X\otimes A \stackrel{b}{\to} X\to 0\end{equation}
whose homology is the Hochschild homology $H_*(A,X)$ of an $A$-bimodule $X$.
Towards obtaining a Jacobi-Zariski long exact sequence for a split extension algebra, we observe the following.

\begin{prop}\label{sequence of complexes}
Let $A=B\oplus M$ be a split extension of algebras, and let $X$ be an $A$-bimodule. For $*\geq1$, there is a sequence of chain complexes
\begin{equation}\label{the sequence}
0\to C_*(B,X)\stackrel{\iota}{\to}  C_*(A,X)\stackrel{\kappa}{\to} C_*^M(A|B,X)\to 0
\end{equation}
where $\iota$ is injective,  $\kappa$ is surjective and $\kappa\iota=0$.

In degree $0$ we have the sequence
$$0\to X \stackrel{1}{\to} X\to X_B\to 0.$$
\end{prop}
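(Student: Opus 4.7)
The plan is to exhibit explicit formulas for $\iota$ and $\kappa$, confirm the three formal properties at once, and then check compatibility with the Hochschild boundaries. In each positive degree $n$, define $\iota_n : X\otimes B^{\otimes n}\to X\otimes A^{\otimes n}$ as the tensor product of the inclusion $B\hookrightarrow A$ with the identity on $X$. For $\kappa_n : X\otimes A^{\otimes n}\to X\otimes_{B^e} M^{\otimes_B n}$, use the canonical decomposition $a=(a)_B+(a)_M$ from $A=B\oplus M$ and set
\[
\kappa_n(x\otimes a_1\otimes\cdots\otimes a_n)\;=\; x\otimes_{B^e}(a_1)_M\otimes_B\cdots\otimes_B(a_n)_M .
\]

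With these formulas the three formal properties are immediate. The map $\iota_n$ is injective as a tensor product over the field $k$ of split monomorphisms; $\kappa_n$ is surjective because every generator $x\otimes_{B^e} m_1\otimes_B\cdots\otimes_B m_n$ is hit by $x\otimes m_1\otimes\cdots\otimes m_n$; and $\kappa_n\iota_n=0$ since $(b)_M=0$ for every $b\in B$. The chain map identity $b\iota=\iota b$ is also clear: the bar differential uses only multiplication in $A$ and the two-sided $A$-action on $X$, both of which restrict to the corresponding operations for $B$ and for $X$ viewed as a $B$-bimodule.

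The main obstacle is showing that $\kappa$ is a chain map. The approach is to factor
\[
X\otimes A^{\otimes n}\ \xrightarrow{\;q\;}\ X\otimes_{B^e}A^{\otimes_B n}\ \xrightarrow{\;\overline{\pi}\;}\ X\otimes_{B^e} M^{\otimes_B n},
\]
where $q$ is the canonical projection passing from $\otimes_k$ to $\otimes_{B^e}$ on the outer factors and to $\otimes_B$ on the inner ones, and $\overline{\pi}$ is induced slot-wise by the $B$-bimodule projection $A\twoheadrightarrow M$. That $q$ commutes with the boundary is formal: the standard Hochschild differential is $B^e$-linear on the outer factors and $B$-bilinear on the inner ones, so it descends to the quotient. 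For $\overline{\pi}$ the subtlety is that when $a=b+m$ and $a'=b'+m'$, the product decomposes as $(aa')_M = bm' + mb' + (mm')_M$, producing two spurious cross terms $bm'$ and $mb'$. The key point is that these cross terms telescope and cancel pairwise via the $\otimes_B$ relations $mb\otimes_B m' = m\otimes_B bm'$ on interior slots, combined with the $X\otimes_{B^e}$ relations $xb\otimes y = x\otimes by$ and $bx\otimes y = x\otimes yb$ on the first and last slots. This is the same mechanism that makes the classical reduced bar construction into a complex, so no new phenomenon appears.

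In degree zero one has $C_0(B,X) = C_0(A,X) = X$ and $C_0^M(A|B,X) = X_B$; the same formulas then give $\iota_0 = \mathrm{id}_X$ and $\kappa_0$ equal to the canonical projection $X\to X_B$, matching the degree zero sequence displayed at the end of the statement.
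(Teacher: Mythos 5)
Your proposal is correct and takes essentially the same approach as the paper: the same formulas for $\iota$ and $\kappa$, the same three immediate checks, and the same two ingredients for the chain-map verification, namely the decomposition $(aa')_M = a_Ma'_M + a_Ba'_M + a_Ma'_B$ and the $\otimes_{B^e}$ and $\otimes_B$ telescoping relations. The only difference is organizational — you factor $\kappa$ as $\overline{\pi}\circ q$ through the unreduced relative complex $C_*(A|B,X)$ (your $q$ is exactly the map $\kappa'$ in the remark following the proposition in the paper) — but this does not change the substance of the computation.
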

\begin{proof}
The definition of the map $\iota$ is clear, and it is also clear that $\iota$ is an injective map of complexes. The map $\kappa$  given by
  $$x\otimes a_1\otimes\cdots\otimes a_n \mapsto x\otimes_{B^e}\left[ \left(a_1\right)_M\otimes_B\cdots\otimes_B \left(a_n\right)_M\right]$$
 is surjective, and $\kappa\iota=0$.
The verification that $\kappa$ is a map of complexes does not raise any difficulty. It uses extensively that $(aa')_M= a_Ma'_M+a_Ba'_M+a_Ma'_B$ for $a,a'\in A$ and that the first tensor product in $C_*^M(A|B,X)$ is over $B^e$.\qed
\end{proof}

\begin{rema}
Considering $C_*(A|B,X)$ instead of $C_*^M(A|B,X)$, and $\kappa'$ given by
  $$x\otimes a_1\otimes\cdots\otimes a_n \mapsto x\otimes_{B^e}\left[ a_1\otimes_B\cdots\otimes_B a_n\right]$$
does not give in general $\kappa'\iota=0$.
\end{rema}

Let $A=B\oplus M$ be a split extension. In the ensuing decomposition of the vector space $A^{\otimes n}$, let $[M_pB_q]$ be the direct sum of the direct summands containing $p$ tensorands in $M$ and $q$ tensorands in $B$, with $p+q=n$. For instance
-- omitting the $\otimes$ signs -- we have that
$$[M_2B_2]= MMBB \oplus MBMB \oplus MBBM \oplus BMBM \oplus BMMB\oplus BBMM$$
which is a direct summand of $A^{\otimes 4}$.

We set
\begin{equation}\label{K}
K_{n,0} = \Ker (X\otimes M^{\otimes n}\twoheadrightarrow X\otimes_{B^e} M^{\otimes_B n}).
\end{equation}

\begin{lemm}\label{defect}
In the situation of Proposition \ref{sequence of complexes},
$$
\arraycolsep=1mm\def\arraystretch{2}
\begin{array}{llll}
\Ker \kappa = \bigoplus_{\substack{p+q=n\\p\geq 0 \ q>0}}\ X\otimes [M_pB_q] \ \oplus\ K_{n,0}\\
\Im \iota = X\otimes [M_0B_n]\\
\Ker\kappa/\Im \iota=  \bigoplus_{\substack{p+q=n\\p>0\ q>0}}\ X\otimes [M_pB_q] \ \oplus\ K_{n,0}.
\end{array}
$$
\end{lemm}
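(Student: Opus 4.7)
The plan is to compute all three objects by exploiting the decomposition
$$A^{\otimes n}=\bigoplus_{p+q=n}[M_pB_q]$$
as a direct sum of vector subspaces, then tensoring on the left with $X$.

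First I would analyse the restriction of $\kappa$ to each summand $X\otimes[M_pB_q]$. Given a pure tensor $a_1\otimes\cdots\otimes a_n$ where each $a_i$ belongs either to $M$ or to $B$, the image under $\kappa$ is $x\otimes_{B^e}[(a_1)_M\otimes_B\cdots\otimes_B(a_n)_M]$. Since $a_i=(a_i)_B+(a_i)_M$, we have $(a_i)_M=0$ whenever $a_i\in B$. Therefore every summand with at least one $B$-tensorand, that is $q>0$, is sent to zero by $\kappa$, while on the remaining summand $X\otimes[M_nB_0]=X\otimes M^{\otimes n}$ the map $\kappa$ coincides with the canonical surjection $X\otimes M^{\otimes n}\twoheadrightarrow X\otimes_{B^e}M^{\otimes_Bn}$ whose kernel is by definition $K_{n,0}$. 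Putting these two observations together yields the formula for $\Ker\kappa$.

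Next, the description of $\Im\iota$ is immediate from the definition of $\iota$ as the chain map induced by the inclusion $B\hookrightarrow A$: in degree $n$ its image is precisely $X\otimes B^{\otimes n}$, which is the summand $X\otimes[M_0B_n]$. Note in particular that $\Im\iota$ sits inside the part of $\Ker\kappa$ indexed by $q>0$ (taking $p=0$, $q=n$), which re-confirms $\kappa\iota=0$.

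Finally, for the quotient $\Ker\kappa/\Im\iota$, I would just subtract: from the expression for $\Ker\kappa$ the only summand with $p=0$ and $q>0$ is the one with $p=0,q=n$, and this is exactly $\Im\iota$. Hence the remaining summands are those with both $p>0$ and $q>0$, plus the piece $K_{n,0}$ coming from $X\otimes M^{\otimes n}$. The whole argument is essentially bookkeeping on the decomposition of $A^{\otimes n}$; the only small subtlety — which I would make explicit — is that $K_{n,0}$ is a genuine subspace of $X\otimes M^{\otimes n}=X\otimes[M_nB_0]$ and is disjoint from all the $X\otimes[M_pB_q]$ with $q>0$, so that the stated direct sums make sense.
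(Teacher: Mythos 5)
Your proof is correct and follows essentially the same route as the paper: decompose $X\otimes A^{\otimes n}$ into the summands $X\otimes[M_pB_q]$, observe that $\kappa$ kills each summand with $q>0$ (since $(a_i)_M=0$ for $a_i\in B$) and restricts on $X\otimes M^{\otimes n}$ to the canonical surjection with kernel $K_{n,0}$, identify $\Im\iota$ with the $(p,q)=(0,n)$ summand, and take the quotient. The paper's own argument is the same bookkeeping, stated slightly more tersely.
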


\begin{proof}
Consider the direct sum decomposition $$X\otimes A^{\otimes n} = \bigoplus_{\substack{p+q=n\\p\geq 0\ q\geq 0}}\ X\otimes [M_pB_q].$$
If $q>0$, then $\kappa\left(X\otimes [M_pB_q]\right)=0$, hence $$\bigoplus_{\substack{p+q=n\\p\geq 0\ q>0}}\ X\otimes [M_pB_q]\subset \Ker \kappa.$$
Instead if $q=0$, then $\kappa_{\big |{X\otimes M^{\otimes n}}}$ is not zero in general and its kernel is denoted $K_{n,0}$. It follows that $\Ker \kappa$ is as stated.

In turn, in the above direct sum decomposition of $X\otimes A^{\otimes n}$, the direct summand for $p=0$ and $q=n$ is clearly $\Im \iota$. This vector space is one of the direct summands obtained above for $\Ker \kappa$. The decomposition of $\Ker\kappa/\Im \iota$ follows.
\qed
\end{proof}

\section{\sf Nearly exact sequences and the Jacobi-Zariski long exact sequence}\label{nearly}

In this section we will prove that if a sequence as (\ref{the sequence}) has zero homology for large enough degrees at the second page
of the associated spectral sequence, then there is a long exact sequence in homology starting at this precise degree.

\begin{defi}\label{sequence nearly exact}
A sequence of chain complexes concentrated in non negative degrees
$$0\to C_*\stackrel{\iota}{\to}  D_*\stackrel{\kappa}{\to} E_*\to 0$$
is  \emph{$m$-nearly exact} if
\begin{itemize}
\item[-]$\iota$ is injective,
 \item[-] $\kappa$ is surjective,
 \item[-]$\kappa\iota=0$,
 \item[-] the chain complex $\Ker \kappa/\Im \iota$ with boundary induced by the boundary of $D$, is exact in degrees  $\geq m$.
 \end{itemize}

\end{defi}

We will prove later on that under some hypotheses, the sequence of Proposition \ref{sequence of complexes} is nearly exact.

\begin{theo}\label{long exact sequence}
Let
\begin{equation}\label{nearly exact}
0\to C_*\stackrel{\iota}{\to}  D_*\stackrel{\kappa}{\to} E_*\to 0
\end{equation}
be a  $m$-nearly exact sequence of chain complexes. There is a long exact sequence as follows:
\begin{equation*}
\dots \stackrel{\delta}{\to} H_{m+1}(C) \stackrel{\iota}{\to} H_{m+1}(D) \stackrel{\kappa}{\to} H_{m+1}(E) \stackrel{\delta}{\to} H_{m}(C)  \stackrel{\iota}{\to} H_{m}(D).
\end{equation*}
\end{theo}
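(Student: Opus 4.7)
The plan is to reduce the assertion to two ordinary short exact sequences of chain complexes and splice their homology long exact sequences. Set $K_* := \Ker\kappa$ and $F_* := K_*/\Im\iota$. The hypothesis $\kappa\iota = 0$ gives $\Im\iota\subseteq K_*$, and since $\iota$ is injective and $\kappa$ is surjective we obtain two genuine short exact sequences of complexes:
\begin{equation*}
0 \to C_* \stackrel{\iota}{\to} K_* \to F_* \to 0, \qquad 0 \to K_* \hookrightarrow D_* \stackrel{\kappa}{\to} E_* \to 0.
\end{equation*}

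Next I would exploit the first of these. Its standard homology long exact sequence reads
$$\cdots \to H_{n+1}(F) \to H_n(C) \stackrel{\iota_*}{\to} H_n(K) \to H_n(F) \to \cdots,$$
and since by $m$-near exactness $H_j(F) = 0$ for all $j\geq m$, we conclude that $\iota_*\colon H_n(C) \to H_n(K)$ is an isomorphism for every $n\geq m$. This is the crucial consequence of the near-exactness hypothesis.

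Then I would take the long exact sequence associated to the second short exact sequence,
$$\cdots \to H_{n+1}(E) \stackrel{\partial}{\to} H_n(K) \to H_n(D) \stackrel{\kappa_*}{\to} H_n(E) \stackrel{\partial}{\to} H_{n-1}(K) \to \cdots,$$
and for each $n\geq m$ replace $H_n(K)$ by $H_n(C)$ using $\iota_*^{-1}$; setting $\delta := \iota_*^{-1}\circ\partial$ converts the truncated spliced sequence into the desired one. Exactness at $H_n(C)$, $H_n(D)$, $H_n(E)$ for $n\geq m+1$ transfers verbatim from the second long exact sequence, and exactness at the terminal step $H_m(C)\to H_m(D)$ follows from the surjectivity of $\iota_*$ in degree $m$ together with exactness of the second long exact sequence at $H_m(K)\to H_m(D)$.

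I do not anticipate a genuine obstacle, only careful bookkeeping of degrees. The only subtle point is that to upgrade $\iota_*$ to an isomorphism in degree $n$ one needs the vanishing of $H_j(F)$ for both $j=n$ and $j=n+1$, which is guaranteed precisely when $n\geq m$; this is exactly why the resulting sequence terminates at $H_m(D)$ and cannot in general be prolonged further to the right without stronger acyclicity of $F_*$.
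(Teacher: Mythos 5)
Your proof is correct, and it takes a genuinely different and more elementary route than the paper. The paper builds the double complex with $E$, $D$, $C$ placed at columns $p=0,1,2$ and runs two spectral sequences: the filtration by rows shows that the total homology vanishes in degrees $\geq m+1$, and the filtration by columns then identifies $\Ker\kappa_1/\Im\iota_1$ as permanent and hence zero, with the $d_2$ differential furnishing the connecting map $\delta\colon \Ker\iota_1\to\Coker\kappa_1$. Your argument replaces this with a splice: writing $K_*=\Ker\kappa$ and $F_*=K_*/\Im\iota$, the two genuine short exact sequences $0\to C_*\to K_*\to F_*\to 0$ and $0\to K_*\to D_*\to E_*\to 0$ give ordinary long exact sequences; near-exactness kills $H_j(F)$ for $j\geq m$, so $\iota_*\colon H_n(C)\to H_n(K)$ is an isomorphism for all $n\geq m$, and substituting $H_n(C)$ for $H_n(K)$ in the second long exact sequence (with $\delta:=\iota_*^{-1}\circ\partial$) yields precisely the claimed sequence, with the induced maps agreeing since $\iota$ factors through $K_*$ at the chain level. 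Your degree bookkeeping is exactly right: surjectivity of $\iota_*$ requires $H_n(F)=0$, injectivity requires $H_{n+1}(F)=0$, so $n\geq m$ is what is needed, and this is why the sequence terminates at $H_m(D)$. The splicing approach is shorter, avoids spectral sequences entirely, and makes the role of the hypothesis transparent; the paper's spectral-sequence formulation has the advantage of fitting the standard framework and of producing $\delta$ intrinsically as a $d_2$, but for this result your version is the more economical one.
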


\begin{proof}
We will use standard results on spectral sequences, see for instance  \cite{MCCLEARY} or \cite{WEIBEL}.

The homological double complex arising from the sequence (\ref{nearly exact})  with the standard change of signs, has the complexes $E$, $D$ and $C$ at columns $p=0$, $1$ and $2$ respectively. Firstly we claim that this complex has zero homology in  total degrees $\geq m+1$. Indeed, consider the spectral sequence given by the filtration by the rows. At the first page the columns corresponding to $p=0,2$  are zero since $\iota$ is injective and $\kappa$
is surjective. At column $p=1$ we have the homology vector spaces of the sequence (\ref{nearly exact}) corresponding to the complex
in the middle. Since the sequence is $m$-nearly exact, at the second page the column $p=1$ has zeros in degrees $\geq m$, and
 zeros elsewhere. This proves the claim.

Secondly we consider the filtration by columns. In page $1$ of the corresponding spectral sequence,  let $\iota_1$ and $\kappa_1$ be the horizontal maps induced by $\iota$ and $\kappa$ at the homology level of the complexes of the sequence (\ref{nearly exact}). They are the morphisms of the intended long exact sequence. We assert that in degrees $\geq m+1$ there is exactness at
the column corresponding to the homology of $D$. Indeed, the vector spaces at the second page at column $p=1$ are
$\Ker \kappa_1/\Im \iota_1$. At these spots the differentials $d_2$ come from zero and go to zero. Hence these vector spaces live forever in the subsequent pages of the spectral sequence. We proved before that the complex has no homology in total  degrees $\geq m+1$, hence these vector spaces vanish in degrees $\geq m+1$.

Finally we turn to the connecting homomorphism $\delta$. In the second page of the spectral sequence just considered, at columns $p=0,2$ we have respectively $\Coker\kappa_1$ and $\Ker\iota_1$. We assert that the differentials  $d_2: \Ker\iota_1 \to \Coker \kappa_1$  from total degree $m+1$ to total degree $m$ are isomorphisms, as well as in larger degrees.  Indeed,  in these degrees $\Ker d_2$ and $\Coker d_2$ live forever in the spectral sequence, hence they vanish by the same argument than above. We assert that composing $d_2^{-1}$  with the inclusion of $\Ker\iota_1$ and the canonical projection to $\Coker\kappa_1$ provides the required connecting homomorphism $\delta$ of the long exact sequence for these degrees. Indeed, by construction $\Ker\delta=\Im \kappa_1$ and $\Im \delta= \Ker\iota_1$. \qed

\end{proof}

For the next result we assume that the $B$-bimodule $M$ verifies that for $*>0$ we have $\Tor_*^B(M, M^{\otimes_B n})=0$ for all $n$. Note that this is fulfilled if $M$ is either a left or a right projective $B$-module.

\normalsize

\begin{prop}\label{F}
Let $A=B\oplus M$ be a split algebra, let $X$  be an $A$-bimodule and consider the sequence (\ref{the sequence}) as a double complex after performing the standard change of signs. Let $E^2_{1,*}$ be the second page
of the spectral sequence obtained by filtering by rows.

There is a double complex $C_{*,*}$ which total homology is $E^2_{1,*}$. The filtration by columns of $C_{*,*}$  yields a spectral sequence.  If $M$ verifies that for $*>0$ we have $\Tor_*^B(M, M^{\otimes_B n})=0$ for all $n$, the terms at page $1$ are
$$F^1_{p,q}= \Tor^{B^e}_{p+q}(X,M^{\otimes_Bp}) \mbox{ for } p,q>0$$
and $0$ otherwise.

\end{prop}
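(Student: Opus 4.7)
The plan is to exploit the direct-sum decomposition $A=B\oplus M$ to bigrade the Hochschild complex. Specifically, every basic tensor $x\otimes a_1\otimes\cdots\otimes a_n \in X\otimes A^{\otimes n}$ admits a bidegree $(p,q)$ with $p$ the number of tensorands in $M$ and $q=n-p$ the number in $B$. A direct inspection of the Hochschild boundary $b$ using the identity $(aa')_M = a_Ma'_M + a_Ba'_M + a_Ma'_B$ (already exploited in Proposition \ref{sequence of complexes}) shows that $b = b_h + b_v$, where $b_h$ preserves $p$ and strictly decreases $q$, while $b_v$ strictly decreases $p$. This endows $C_*(A,X)$ with a double-complex structure whose associated total complex is the usual Hochschild complex, and which restricts compatibly to $C_*(B,X)=$ column $p=0$. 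By Lemma \ref{defect}, this bigrading restricts to $\Ker\kappa/\Im\iota$: at total degree $n$ it has $X\otimes[M_pB_q]$ at bidegree $(p,q)$ for $p,q\geq 1$, together with $K_{n,0}$ at bidegree $(n,0)$; a direct check shows $b_h$ carries $X\otimes[M_pB_1]$ into $K_{p,0}$, so this is indeed a sub-bicomplex. Its total complex is $\Ker\kappa/\Im\iota$, hence its total homology is $E^2_{1,*}$.

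Next I would construct $C_{*,*}$ as a Cartan--Eilenberg-type refinement of this bigraded complex: for each $p\geq 1$ one replaces column $p$ by the $B^e$-projective resolution of $M^{\otimes_B p}$ obtained from the two-sided bar resolution of $B$ (this exists as a $B^e$-resolution thanks to the one-sided projectivity of $M$ and the iterability guaranteed by the Tor-vanishing hypothesis). Arranging the indexing so that the $q$-th entry of column $p$ corresponds to resolution degree $p+q$, with good truncation at position $p+1$, makes $C_{p,q}=0$ for $q\leq 0$ and yields a column whose vertical homology at position $q\geq 1$ is $\Tor^{B^e}_{p+q}(X,M^{\otimes_B p})$. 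The horizontal differentials of $C_{*,*}$ are induced by the $b_v$-component of the Hochschild boundary, lifted to the resolutions. A standard quasi-isomorphism argument (replacing each column by its resolution is a quasi-iso at the total-complex level) guarantees that the total complex of $C_{*,*}$ remains quasi-isomorphic to $\Ker\kappa/\Im\iota$, so total homology is still $E^2_{1,*}$.

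To identify the column's vertical homology with the stated Tor, I would exploit the finer internal multigrading by $(i_0,\dots,i_p)$ with $i_0+\cdots+i_p = q$, counting $B$-factors in each slot between or flanking the $p$ factors of $M$. The differential $b_h$ splits as $b_h = b_L^{(0)}+\cdots+b_L^{(p)}$, where $b_L^{(j)}$ is a bar-type differential reducing $i_j$. Running the spectral sequences of this multifiltration one block at a time and repeatedly invoking the hypothesis $\Tor_*^B(M,M^{\otimes_B n})=0$ for $*,n>0$, the inner $\Tor^B$-terms collapse: at stage $j$ one reduces to $(X\otimes_B M^{\otimes_B j})\otimes B^{\otimes i_j}\otimes M\otimes\cdots\otimes M\otimes B^{\otimes i_p}$ at $i_0=\cdots=i_{j-1}=0$. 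After all left blocks collapse, one is left with the cyclic (Hochschild) complex $(X\otimes_B M^{\otimes_B p})\otimes B^{\otimes i_p}$, whose homology is $HH_{i_p}(B,X\otimes_B M^{\otimes_B p})$. The spectral sequence $E^2_{p,q}=HH_p(B,\Tor^B_q(X,M^{\otimes_B p}))\Rightarrow\Tor^{B^e}_{p+q}(X,M^{\otimes_B p})$ then collapses (again by the Tor-vanishing hypothesis), yielding $HH_*(B,X\otimes_B M^{\otimes_B p})=\Tor^{B^e}_*(X,M^{\otimes_B p})$. Combined with the indexing shift built into $C_{*,*}$, this gives $F^1_{p,q}=\Tor^{B^e}_{p+q}(X,M^{\otimes_B p})$ for $p,q>0$.

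The main obstacle is the careful homological bookkeeping: one has to set up $C_{*,*}$ so that it simultaneously (i) has $\Ker\kappa/\Im\iota$ as a quasi-isomorphic total complex, (ii) exhibits the shift of $p$ in the Tor index on column $p$, and (iii) has horizontal differentials compatible with the $b_v$-part of the Hochschild boundary. Both the treatment of the $K_{n,0}$-summand (which forces the resolution to start in degree $p+1$ via a good truncation) and the iterated collapsing of the multifiltration spectral sequence require delicate use of the hypothesis $\Tor_*^B(M,M^{\otimes_B n})=0$, and this is the principal technical point of the proof.
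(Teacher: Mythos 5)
Your opening moves agree with the paper: you bigrade $X\otimes A^{\otimes\bullet}$ by counting $M$- versus $B$-tensorands, observe that the Hochschild boundary splits as $b_h+b_v$ with $b_h$ decreasing $q$ and $b_v$ decreasing $p$, and extract from Lemma~\ref{defect} a sub-bicomplex whose total complex is $\Ker\kappa/\Im\iota$. From that point on, however, your construction diverges from the paper's, and the divergence introduces real gaps.

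The paper does \emph{not} replace the columns by fresh resolutions. It observes that the $p$-th column $X\otimes[M_pB_\bullet]$ of the auxiliary bicomplex $C'_{*,*}$ \emph{already equals} $X\otimes_{B^e}(-)$ applied to an explicit free $B^e$-resolution of $M^{\otimes_Bp}$. That resolution is built iteratively: starting from the resolution $F_\bullet\to M$ of \cite{CIBILS2000}, one tensors over $B$ with the one-sided bar resolution of $M$ to get a bicomplex $\mathbf D$, and then quotients by the edge subcomplex $\mathbf S$ consisting of the bottom row and left column. The hypothesis $\Tor^B_*(M,M^{\otimes_Bn})=0$ for $*>0$ enters exactly there: it makes $\mathsf{Tot}(\mathbf S)$ exact in positive degrees, so that $\mathsf{Tot}(\mathbf{D/S})$ is a free $B^e$-resolution of $M^{\otimes_B p}$, and one iterates. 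The bottom row $K_{*,0}$ is then handled simply by noting that $b_h\colon X\otimes[M_pB_1]\to K_{p,0}$ is surjective, so the $q=0$ entries of the column filtration's first page vanish. No Cartan--Eilenberg refinement, no multifiltration, and no Hochschild-of-$B$ spectral sequence are involved.

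Two concrete problems with your route. First, your final collapse step asserts that $E^2_{p,q}=HH_p(B,\Tor^B_q(X,M^{\otimes_Bp}))\Rightarrow\Tor^{B^e}_{p+q}(X,M^{\otimes_Bp})$ degenerates ``again by the Tor-vanishing hypothesis.'' That degeneration would require $\Tor^B_q(X,M^{\otimes_Bp})=0$ for $q>0$, but the hypothesis in force concerns only $\Tor^B_*(M,M^{\otimes_Bn})$; it says nothing about $\Tor^B(X,-)$, and $X$ is an arbitrary $A$-bimodule with no flatness assumption on either side. So the identification $HH_*(B,X\otimes_BM^{\otimes_Bp})\cong\Tor^{B^e}_*(X,M^{\otimes_Bp})$ is unjustified. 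Second, the proposed indexing shift (making the $q$-th entry of column $p$ carry resolution degree $p+q$, with good truncation at $p+1$) silently discards the Tor groups $\Tor^{B^e}_r(X,M^{\otimes_Bp})$ for $0\le r\le p$ from the column; the total complex of the shifted/truncated bicomplex cannot then be quasi-isomorphic to $\Ker\kappa/\Im\iota$, so requirement (i) of your own ``bookkeeping'' list fails. The paper avoids both issues by keeping the natural indexing and recognizing the columns as already computing the relevant Tor.
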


\begin{proof}
By Lemma \ref{defect},
 $$E^1_{1,n}=\bigoplus_{\substack{p+q=n\\p>0\ q>0}}\ X\otimes [M_pB_q] \ \oplus\ K_{n,0}.$$ The differential of this column is deduced from the one of $C_*(A,X)$. Clearly this column is the total complex of the double chain complex.
\begin{itemize}
\item $C_{p,q}=X\otimes [M_pB_q]$ for $p,q>0$,
\item $C_{p,0}=K_{p,0}$ for $p>0,$
\item  $0$ at other spots.
\end{itemize}
 We  modify momentarily $C_{*,*}$  at its bottom line  as follows: \\
 \centerline{$C'_{*,*}=  X\otimes [M_pB_q]$ for $p>0, q\geq 0$, and $0$ at other spots,}\\
 with differentials still inherited from $C_*(A,X)$.

We assert that the homology of the column  $p=1$ of $C'_{*,*}$ is $\Tor^{B^e}_*(X,M)$. To this purpose, we next recall a specific projective resolution of $M$ as a $B^e$-module which is provided in the proof of Proposition 4.1 of \cite{CIBILS2000}.  We have that the functor $X\otimes_{B^e}-$ applied to it yields the mentioned column, proving this way the assertion. As before, we omit the tensor product sign $\otimes$ over $k$.

 Let
$${}^qM^p=\underbrace{B\cdots B}_q M\underbrace{ B\cdots  B}_p$$
and consider the following complex of free $B^e$-modules:
$$\cdots\stackrel{d}{\to}\displaystyle \bigoplus_{\substack{ p+q=n+1\\ p>0\ q>0}} {}^qM^p\stackrel{d}{\to}\cdots\stackrel{d}{\to}\  {}^1M^2\oplus {}^2M^1 \ \stackrel{d}{\to}\  {}^1M^1 \ \stackrel{d}{\to} M\to 0,$$
where the first differential is special, namely $d(b\otimes m\otimes b')=bmb'$. In larger degrees, the differential is the differential of the total complex of the double complex which has ${}^qM^p$ at the spot $(q,p)$, with vertical and horizontal differentials  ${}^qM^p \to {}^qM^{p-1}$   and ${}^qM^p \to {}^{q-1}M^{p}$ given respectively by
\vskip1mm
$b_1\otimes \cdots\otimes b_q\otimes m \otimes b'_1\otimes\cdots\otimes b'_p\mapsto \\ (-1)^{q+1}[b_1\otimes \cdots\otimes b_q\otimes m b'_1\otimes\cdots\otimes b'_p+\\
\sum (-1)^i b_1\otimes \cdots\otimes b_q\otimes m \otimes b'_1\otimes\cdots \otimes b'_ib_{i+1} \otimes\cdots\otimes b'_p]$
\vskip1mm
and
\vskip1mm
$b_1\otimes \cdots\otimes b_q\otimes m \otimes b'_1\otimes\cdots\otimes b'_p\mapsto \\
\sum (-1)^i b_1\otimes \cdots \otimes b_ib_{i+1} \otimes \cdots \otimes b_q\otimes m \otimes b'_1\otimes\cdots\otimes b'_p+\\
(-1)^q b_1\otimes \cdots\otimes b_q m \otimes b'_1\otimes\cdots\otimes b'_p.$

\vskip1mm

We make precise that the vertical and horizontal differentials  ${}^qM^1 \to {}^qM^{0}$   and ${}^1M^p \to {}^{0}M^{p}$ are given respectively by
\vskip1mm
$b_1\otimes \cdots\otimes b_q\otimes m \otimes b'_1\mapsto (-1)^{q+1}b_1\otimes \cdots\otimes b_q\otimes m b'_1$
\vskip1mm
and
\vskip1mm
$b_1\otimes m \otimes b'_1\otimes\cdots\otimes b'_p\mapsto
- b_1m \otimes b'_1\otimes\cdots\otimes b'_p.$

\vskip1mm
\normalsize

For the column $p=2$  of $C'_{*,*}$, let $F_\bullet \to M$ be the previous projective resolution  of $M$ by free bimodules. Its tensor product over $B$ with the left bar resolution
$$\cdots \to BBM\to BM\to M\to 0$$ of $M$ provides the following double complex $\mathbf D$:
\vskip3mm

\begin{tikzcd}
  & \vdots \arrow[d]                  & \vdots \arrow[d]                    & \vdots \arrow[d]                    &                  \\
0 & BBM\otimes_BM \arrow[d] \arrow[l] & BBM\otimes_BF_0 \arrow[d] \arrow[l] & BBM\otimes_BF_1 \arrow[d] \arrow[l] & \cdots \arrow[l] \\
0 & BM\otimes_BM \arrow[d] \arrow[l]  & BM\otimes_BF_0 \arrow[d] \arrow[l]  & BM\otimes_BF_1 \arrow[d] \arrow[l]  & \cdots \arrow[l] \\
0 & M\otimes_BM \arrow[l] \arrow[d]   & M\otimes_BF_0 \arrow[l] \arrow[d]   & M\otimes_BF_1 \arrow[l] \arrow[d]   & \cdots \arrow[l] \\
  & 0                                 & 0                                   & 0                                   &
\end{tikzcd}

\vskip3mm
The bar resolution has a right $B$-module contracting homotopy. Hence the columns of $\mathbf D$ are acyclic and its total complex $\mathsf{Tot}(\mathbf{D})$ is exact.

However the bimodules of the bottom row and of the left column are not projective in general, while the others are. In order to obtain the required projective resolution of $M\otimes_B M$ as a bimodule, we proceed as in \cite{CIBILS2000}. Let $\mathbf S$ be the double subcomplex of  $\mathbf D$ given by the bottom row and the left column. We claim that $\mathsf{Tot}(\mathbf{D/S}) \to M\otimes_BM$ is a free resolution of $M\otimes_BM$ as a $B$-bimodule.

First note that the homology of the bottom row is precisely $\Tor_*^B(M,M)$, which is zero in positive degrees by  hypothesis. Hence $\mathsf{Tot}(\mathbf S)$ is exact in positive degrees, while in degree zero its homology is $M\otimes_B M$: indeed, observe that for surjective morphisms $f: Y\to X$ and $g:Z\to X$, and $(f,g): Y\oplus Z\to X$ we have that
$$\frac{\ker (f,g)}{\ker f\oplus\ker g} \mbox{ is isomorphic to } X$$
by the map induced by $f$ or by $g$.

Next we consider the long exact sequence associated to the exact sequence of complexes
 $$0\to \mathbf S \to \mathbf D\to \mathbf {D/S}\to 0.$$

 It shows that  $\mathsf{Tot}(\mathbf{D/S})$ is acyclic except in its last term where the homology is $M\otimes_B M$. This provides the required resolution of  $M\otimes_B M$.

 Next we iterate the process by tensoring the last resolution with the left bar resolution of $M$. As before, we use that  $\Tor_*^B(M,M\otimes_B M)=0$ in positive degrees to infer a projective resolution of the $B$-bimodule $M\otimes_B M\otimes_B M$.

This shows that the homology of the $p$-th column is   $\Tor^{B^e}_{*}(X,M^{\otimes_Bp})$.

\normalsize

In order to return to $C_{*,*}$, note that by (\ref{K}) we have $$(X\otimes M^{\otimes p})/K_{p,0} = X\otimes_{B^e} M^{\otimes_B p} = \Tor^{B^e}_0(X,M^{\otimes_Bp}).$$
Hence replacing the bottom row of $C'$ by  $K_{*,0}$ yields surjective maps at the bottom stage of each column, therefore we have zero homology at spots of the bottom row of $C$. \qed
\end{proof}

Next we provide sufficient conditions to ensure that the sequence (\ref{the sequence}) of Proposition \ref{sequence of complexes} is nearly exact.

\begin{defi}\label{bounded}
Let $B$ be an algebra. A $B$-bimodule $M$ is \emph{bounded} if
\begin{itemize}
\item[-] $M$ is $B$-tensor nilpotent,
\item [-] $M$ is of finite projective dimension as a $B^e$-module,
\item [-] $M$ is either a left or a right projective $B$-module.
\end{itemize}
 \end{defi}

 \begin{rema}
 Let $B$ be an algebra, and let $M$ be a $B$-bimodule with a \emph{$B$-associative structure,} that is an associative map of $B$-bimodules $M\otimes_B M\to M$. Then $B\oplus M$ is a split extension algebra. Of course all split extensions occurs this way.

We underline that in the requirement that $M$ is bounded, the $B$-associative structure of $M$ is not involved.    \end{rema}

\begin{defi} \label{split bounded extension} A \emph{ split bounded extension} $B\oplus M$ is a split extension where $M$ is bounded.
\end{defi}
\begin{prop}\label{it is nearly exact}
 Let $A=B\oplus M$ be  a split bounded extension. Let $n$ be the index of $B$-tensor nilpotency of $M$. Let $u$ be the projective dimension of $M$ as a $B^e$-module, and let $X$ be an $A$-bimodule.

The sequence (\ref{the sequence}) is $nu$-nearly exact.
\end{prop}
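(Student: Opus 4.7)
The plan is to verify the four conditions of Definition \ref{sequence nearly exact}. The first three --- $\iota$ injective, $\kappa$ surjective, $\kappa\iota=0$ --- come for free from Proposition \ref{sequence of complexes}. What is left is to show that the complex $\Ker\kappa/\Im\iota$ has vanishing homology in every degree $m\geq nu$.

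Because $M$ is bounded, $M$ is left or right $B$-projective, so $\Tor_{*}^{B}(M,M^{\otimes_{B} n})=0$ for $*>0$ and the hypothesis of Proposition \ref{F} is met. I would then invoke that proposition to identify $H_m(\Ker\kappa/\Im\iota)$ with $E^{2}_{1,m}$, which is the total homology in degree $m$ of a double complex $C_{*,*}$ whose column spectral sequence has first page
$$F^{1}_{p,q}=\Tor^{B^{e}}_{p+q}(X,M^{\otimes_{B} p})\quad\text{for } p,q>0,$$
and zero elsewhere. It is therefore enough to check that $F^{1}_{p,q}=0$ whenever $p+q\geq nu$. Two observations accomplish this. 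First, $B$-tensor nilpotency of index $n$ forces $M^{\otimes_{B} p}=0$, and hence $F^{1}_{p,q}=0$, as soon as $p\geq n$. Second, for $1\leq p\leq n-1$ I would establish by induction on $p$ the bound $\text{pd}_{B^{e}}(M^{\otimes_{B} p})\leq pu$, which yields $F^{1}_{p,q}=0$ whenever $p+q>pu$. Combined with $p\leq n-1$, this gives $p+q\leq pu\leq (n-1)u<nu$ whenever $F^{1}_{p,q}\neq 0$, as required.

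The main step is therefore the inductive bound, i.e.\ the subadditivity inequality $\text{pd}_{B^{e}}(N\otimes_{B} N')\leq \text{pd}_{B^{e}}(N)+\text{pd}_{B^{e}}(N')$, valid when $N$ is right $B$-projective (the case $M$ left $B$-projective being symmetric). I would prove it by taking bimodule projective resolutions $P_\bullet\to N$ and $Q_\bullet\to N'$: since each $Q_j$ is left $B$-free, the terms $P_i\otimes_{B} Q_j$ of $\mathsf{Tot}(P\otimes_{B} Q)$ remain bimodule projective, while the right $B$-projectivity of $N$ guarantees that the columns $P_\bullet\otimes_{B} Q_j$ are acyclic, so $\mathsf{Tot}(P\otimes_{B} Q)$ is a bimodule projective resolution of $N\otimes_{B} N'$ of length at most $\text{pd}(N)+\text{pd}(N')$. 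Iterating with $N=M$ gives $\text{pd}_{B^{e}}(M^{\otimes_{B} p})\leq pu$, completing the argument. The expected obstacle is precisely this subadditivity estimate; everything else is spectral-sequence bookkeeping, and the one-sided projectivity hypothesis in Definition \ref{bounded} is exactly what is needed to make the relevant columns acyclic.
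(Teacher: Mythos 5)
Your proof follows essentially the same route as the paper: invoke Proposition \ref{F} to reduce nearly-exactness to vanishing of $F^1_{p,q}$ for $p+q\geq nu$, dispose of $p\geq n$ by tensor nilpotency, and dispose of $0<p<n$ via the bound $\mathrm{pd}_{B^e}(M^{\otimes_B p})\leq pu$. The only divergence is that the paper cites \cite[Ch.\ IX, Prop.\ 2.6]{CARTANEILENBERG} for this bound, while you re-derive it from the tensor-product resolution $\mathsf{Tot}(P_\bullet\otimes_B Q_\bullet)$; your derivation is correct, though two of the stated justifications are slightly off: the $B^e$-projectivity of $P_i\otimes_B Q_j$ comes from $Q_j$ being \emph{right} $B$-projective (being a $B^e$-summand of $\bigoplus B\otimes B$), not from $Q_j$ being left $B$-free, and the one-sided projectivity of $N$ is not what makes the columns $P_\bullet\otimes_B Q_j$ acyclic (that follows from $Q_j$ being left $B$-flat) but rather what makes the resulting edge row $N\otimes_B Q_\bullet\to N\otimes_B N'$ exact, which is where the hypothesis of Definition \ref{bounded} is genuinely used.
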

\begin{proof}
We consider the spaces $F_{p,q}^1= \Tor^{B^e}_{p+q}(M^{\otimes_Bp}, X) \mbox{ for } p,q>0$ of Proposition \ref{F}. On the one hand $F_{p,q}^1=0$ for $p\geq v$.

On the other hand, from \cite[Chapter IX, Proposition 2.6]{CARTANEILENBERG} we infer that since $M$ is projective either as left or as right
$B$-module, and is of projective dimension $u$ as a $B$-bimodule, then $M^{\otimes_Bp}$ is of projective dimension  $\leq$
$pu$. Hence if $p+q\geq pu$, then $F_{p,q}^1=0$.

As a consequence, if $p+q\geq nu$, then $F_{p,q}^1=0$. By Proposition \ref{F} we obtain that if $*\geq nu$ then $E^2_{1,*}=0$, which means that the column of homologies from the middle of the sequence  (\ref{the sequence}) has in turn no homology in degrees $\geq nu$, that is the sequence is $nu$-nearly exact.\qed

\end{proof}

The previous result and Theorem \ref{long exact sequence} prove the following.
\begin{theo}\label{JZ nosotros}
Let $A=B\oplus M$ be a split bounded extension  as in Proposition \ref{it is nearly exact}, and let $X$ be an $A$-bimodule. There is a Jacobi-Zariski long exact sequence as follows.
\begin{equation*}
\begin{split}
\dots \stackrel{\delta}{\to} & H_{nu+1}m(B,X) \stackrel{\iota}{\to} H_{nu+1}(A,X) \stackrel{\kappa}{\to} H_{nu+1}(A|B,X) \stackrel{\delta}{\to} \\& H_{nu}(B,X)\stackrel{\iota}{\to} H_{nu}(A,X).
\end{split}
\end{equation*}

\end{theo}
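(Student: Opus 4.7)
The plan is to assemble the statement directly from the machinery already constructed in the excerpt, with essentially no further work. First I would recall the sequence of chain complexes
\[
0\to C_*(B,X)\stackrel{\iota}{\to} C_*(A,X)\stackrel{\kappa}{\to} C_*^M(A|B,X)\to 0
\]
provided by Proposition \ref{sequence of complexes}. By the reduced relative bar resolution theorem, the homology of the three complexes is exactly $H_*(B,X)$, $H_*(A,X)$ and $H_*(A|B,X)$ respectively (recall that $C_*(A|B,X)$ and $C_*^M(A|B,X)$ compute the same relative Hochschild homology, as established just after the statement of the reduced relative bar resolution). Therefore any long exact sequence in the homology of these three complexes will automatically be a Jacobi--Zariski type sequence in the desired homological invariants.

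Next I would invoke Proposition \ref{it is nearly exact}: since $A=B\oplus M$ is a split bounded extension with $M$ of $B$-tensor nilpotency index $n$ and of projective dimension $u$ as a $B^e$-module, the above sequence is $nu$-nearly exact in the sense of Definition \ref{sequence nearly exact}. In particular it satisfies all the hypotheses needed to feed it into Theorem \ref{long exact sequence}.

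Finally, I would apply Theorem \ref{long exact sequence} with $m=nu$ to obtain the long exact sequence
\[
\dots \stackrel{\delta}{\to} H_{nu+1}(B,X)\stackrel{\iota}{\to} H_{nu+1}(A,X)\stackrel{\kappa}{\to} H_{nu+1}(A|B,X)\stackrel{\delta}{\to} H_{nu}(B,X)\stackrel{\iota}{\to} H_{nu}(A,X),
\]
which is exactly the statement to be proved. The connecting map $\delta$ is the one produced by the spectral sequence argument of Theorem \ref{long exact sequence}, induced ultimately from the boundary in $C_*(A,X)$ via the filtration by columns of the double complex associated with the sequence.

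There is no genuine obstacle at this stage: the main difficulty has been front-loaded into the technical results (construction of the reduced relative bar resolution, the homological analysis via Proposition \ref{F} bounding the projective dimension of $M^{\otimes_B p}$ by $pu$, and the spectral sequence argument of Theorem \ref{long exact sequence}). The only point requiring a moment of care is checking that the degree at which the long exact sequence starts is indeed $nu$ rather than, say, $nu-1$; this is pinned down by the $m$-nearly exact condition in Definition \ref{sequence nearly exact}, which asks vanishing of the quotient complex in degrees $\geq m$, matching the exactness range produced in Theorem \ref{long exact sequence}.
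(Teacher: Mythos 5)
Your proposal is correct and follows exactly the paper's own route: the paper states, immediately before Theorem \ref{JZ nosotros}, that ``the previous result and Theorem \ref{long exact sequence} prove the following,'' meaning one simply combines Proposition \ref{it is nearly exact} (the sequence (\ref{the sequence}) is $nu$-nearly exact) with Theorem \ref{long exact sequence} applied with $m=nu$. You also correctly read ``$H_{nu+1}m(B,X)$'' as a typo for $H_{nu+1}(B,X)$.
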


\section{\sf Han's conjecture}\label{Han}

A finite dimensional algebra is called  \emph{smooth} if it is of finite global dimension. As it is mentioned in the Introduction,
the word smooth is originated in commutative algebra and is useful here for brevity.  Note that for noetherian rings, the left and right global dimensions are equal, see \cite{AUSLANDER}.

Han's conjecture states that for $A$ a finite dimensional algebra,  $H_*(A,A)$ vanishes in large enough degrees if and only if $A$ is smooth. Let $\mathcal{H}$ be the class of finite dimensional algebras which verify Han's conjecture. Our aim is to prove the following.
\begin{theo}
Let $A=B\oplus M$ be a split bounded extension of finite dimensional algebras.

$A\in\mathcal{H}$ if and only if  $B\in\mathcal{H}.$
\end{theo}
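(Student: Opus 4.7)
The strategy is to use Theorem \ref{JZ nosotros} twice: once with coefficients $X=A$ to identify $H_*(A,A)$ with $H_*(B,B)$ in large degrees, and once with arbitrary coefficients to transfer smoothness between $A$ and $B$. Let $n$ denote the index of $B$-tensor nilpotency of $M$ and let $u$ be its projective dimension as a $B^e$-module.

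First I would apply Theorem \ref{JZ nosotros} with $X=A$. Corollary \ref{tensor nilpotent H vanish in large degrees} gives $H_*(A|B,A)=0$ for $*\geq n$, so the Jacobi-Zariski long exact sequence forces $H_*(A,A)\cong H_*(B,A)$ for $*$ sufficiently large. Decomposing $A=B\oplus M$ as a $B$-bimodule, the additivity of Hochschild homology in the coefficient bimodule gives $H_*(B,A)=H_*(B,B)\oplus H_*(B,M)$, and the second summand $\Tor^{B^e}_*(M,B)$ vanishes for $*>u$. Hence $H_*(A,A)\cong H_*(B,B)$ for $*\gg 0$, so these two sequences vanish in large degrees simultaneously.

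For the smoothness part, the key observation is that for a finite dimensional algebra over an algebraically closed field, being smooth amounts to the uniform vanishing of $H_*(A,-)$ on the category of $A$-bimodules, equivalently to $\Tor^{A^e}_*(A,-)=0$ for $*\gg 0$. The bound $nu$ in Theorem \ref{JZ nosotros} depends only on $M$ and is independent of the coefficient $A$-bimodule $X$, so the isomorphism $H_*(A,X)\cong H_*(B,X)$ for $*>nu$ holds uniformly in $X$. If $B$ is smooth, then $H_*(B,X)=0$ for all $A$-bimodules $X$ in large degrees, and hence so does $H_*(A,X)$, giving $A$ smooth. Conversely, if $A$ is smooth, then any $B$-bimodule $Y$ may be promoted to an $A$-bimodule via the retraction $\pi\colon A\to B$, and since $\pi|_B=\mathrm{id}_B$ its original $B$-bimodule structure is recovered after restriction. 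The uniform vanishing of $H_*(A,Y)$ then transfers via the isomorphism to uniform vanishing of $H_*(B,Y)$, so $B$ is smooth.

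Combining the two steps, the equivalence is immediate: if $A\in\mathcal{H}$ and $H_n(B,B)=0$ for $n\gg 0$, then $H_n(A,A)=0$ for $n\gg 0$ by the first step, so $A$ is smooth since $A\in\mathcal{H}$, and then $B$ is smooth by the second step; the reverse implication is symmetric. The main obstacle is the smoothness equivalence, which relies crucially on the fact that the bound in the Jacobi-Zariski long exact sequence is uniform in the coefficient bimodule; this is precisely what Theorem \ref{JZ nosotros} provides, since $n$ and $u$ are invariants of the bimodule $M$ alone.
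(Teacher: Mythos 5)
Your proof is correct. The first half -- identifying $H_*(A,A)$ with $H_*(B,B)$ in large degrees via the Jacobi-Zariski long exact sequence, the vanishing of $H_*(A|B,A)$ in degrees $\ge n$, and the splitting $H_*(B,A)=H_*(B,B)\oplus\Tor^{B^e}_*(B,M)$ -- is exactly the paper's argument. The second half, the smoothness equivalence, is where you diverge. You invoke the characterization of smoothness for a finite dimensional algebra over an algebraically closed field as $\Tor^{A^e}_*(A,-)=0$ uniformly in large degrees (equivalently, finiteness of $\mathrm{pdim}_{A^e}A$), and then exploit the fact that the bound $nu$ in Theorem \ref{JZ nosotros} depends only on $M$, so the isomorphism $H_*(A,X)\cong H_*(B,X)$ for $*\ge nu+1$ holds uniformly in the coefficient bimodule; pushing $B$-bimodules up to $A$-bimodules through the retraction $\pi$ then lets you transfer the vanishing in both directions. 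This is a clean, unified argument and it is valid, but it leans on a nontrivial external fact -- that $\mathrm{gl.dim}(A)<\infty$ iff $\mathrm{pdim}_{A^e}A<\infty$, which requires $A/\mathrm{rad}\,A$ separable over $k$ (automatic when $k$ is algebraically closed) -- which you assert but do not cite. The paper instead proves the smoothness equivalence directly at the level of module resolutions: for $A$ smooth $\Rightarrow B$ smooth, it restricts a finite $A$-projective resolution of $\overline{Y}$ to $B$ using right $B$-projectivity of $M$; for $B$ smooth $\Rightarrow A$ smooth, it shows induced modules $A\otimes_B Y$ have finite projective dimension and then tensors the finite reduced relative bar resolution \eqref{rrbr} (finite by $B$-tensor nilpotency of $M$, with its right $A$-linear contracting homotopy from Remark \ref{contraction homotopy A}) with $X$ to get a finite resolution of any $A$-module by induced modules. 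The paper's route is longer but more elementary, avoids the separability/algebraic-closure input to the smoothness characterization, and makes no further use of the Jacobi-Zariski sequence beyond the homology-vanishing step; yours is shorter and uses the uniform $nu$ bound in an essential way.
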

The proof relies on the next result.
\begin{prop}
Let $A=B\oplus M$ be a split bounded extension of finite dimensional algebras.
\begin{enumerate}
\item \label{1} $H_*(A,A)$ vanishes in large enough degrees if and only if $H_*(B,B)$ vanishes in large enough degrees.
\item  $A$ is smooth if and only if $B$ is smooth.
\end{enumerate}
\end{prop}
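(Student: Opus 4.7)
My plan is to derive both items from the Jacobi--Zariski long exact sequence of Theorem~\ref{JZ nosotros}, combined with the vanishing of $H_*(A|B, X)$ in degrees above the $B$-tensor nilpotency index (Corollary~\ref{tensor nilpotent H vanish in large degrees}) and with the decomposition $A = B \oplus M$ as a $B$-bimodule. Since the relative terms $H_*(A|B, X)$ vanish in high degrees, the long exact sequence collapses in sufficiently large degrees into a natural isomorphism
$$\iota_* \colon H_*(B, X) \xrightarrow{\ \sim\ } H_*(A, X)$$
for every $A$-bimodule $X$, with the threshold depending only on the bound $nu$ appearing in Proposition~\ref{it is nearly exact}, hence independent of $X$.

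For~(1) I specialize this isomorphism to $X = A$. The splitting $\pi \colon A \to B$ provides the decomposition $A = B \oplus M$ as a $B$-bimodule, so by additivity of Hochschild homology in the coefficients,
$$H_*(B, A) = H_*(B, B) \oplus H_*(B, M).$$
By the bounded hypothesis, $M$ has projective dimension $u$ as a $B^e$-module, hence $H_*(B, M) = \Tor_*^{B^e}(B, M)$ vanishes for $* > u$. Combining, $H_*(A, A) \cong H_*(B, B)$ in all sufficiently large degrees, and~(1) follows at once.

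For~(2) I invoke the standard fact that a finite-dimensional $k$-algebra $C$ over an algebraically closed field is smooth if and only if there is a uniform bound $N$ such that $H_n(C, Y) = 0$ for \emph{every} $C$-bimodule $Y$ and every $n > N$. This reflects the identification of the global dimension of $C$ with the projective (equivalently flat) dimension of $C$ as a $C^e$-module, which holds because $C^e$ is Noetherian and $C$ is finitely generated over $C^e$. Every $B$-bimodule $Y$ can be made into an $A$-bimodule via $\pi$, and its restriction along $B \subset A$ recovers $Y$. Combined with the $X$-independence of the Jacobi--Zariski threshold, this transfers uniform Hochschild-homology vanishing between $A$ and $B$: if $A$ is smooth with bound $N_A$, then for every $B$-bimodule $Y$ viewed as an $A$-bimodule, $H_n(B, Y) \cong H_n(A, Y) = 0$ for $n > \max(N_A, nu)$, forcing $B$ smooth; the converse direction is symmetric, restricting $A$-bimodules along $B \subset A$.

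The principal subtlety lies in~(2): one must exploit that the Jacobi--Zariski threshold is independent of the coefficient bimodule $X$, and combine this with the uniform Hochschild-homology characterization of smoothness for finite-dimensional algebras. Item~(1) is the more direct consequence, reducing to evaluating the isomorphism at $X = A$ and then discarding the $H_*(B, M)$ summand using the bounded hypothesis on the projective dimension of $M$ over $B^e$.
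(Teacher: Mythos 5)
Your part~(1) is essentially identical to the paper's: pass to $H_*(B,A)\cong H_*(A,A)$ in degrees $\geq nu+1$ via the Jacobi--Zariski sequence and the vanishing of $H_*(A|B,A)$, then split $H_*(B,A)=H_*(B,B)\oplus H_*(B,M)$ and kill the second summand using $\mathrm{pd}_{B^e}M=u<\infty$. No remarks needed there.

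Your part~(2) takes a genuinely different route. The paper never uses a Hochschild-homological characterization of finite global dimension; instead it argues directly with module-theoretic resolutions. For ``$A$ smooth $\Rightarrow$ $B$ smooth'' the paper inflates a $B$-module $Y$ to $\overline{Y}$ along $\pi$, takes a finite $A$-projective resolution, and observes that projective $A$-modules restrict to projective $B$-modules because $A=B\oplus M$ is right $B$-projective. For ``$B$ smooth $\Rightarrow$ $A$ smooth'' the paper first shows induced modules $A\otimes_BY$ have finite $A$-projective dimension, then uses the \emph{finite} reduced relative bar resolution (finite because $M$ is $B$-tensor nilpotent) together with Remark~\ref{contraction homotopy A} -- that its contracting homotopy is a right $A$-module map -- to resolve an arbitrary $A$-module by finitely many induced modules after applying $-\otimes_A X$. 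Your argument, by contrast, re-uses the Jacobi--Zariski sequence with \emph{arbitrary} coefficients, exploiting that its threshold $nu$ is independent of the bimodule $X$, and appeals to the equivalence ``$\mathsf{gldim}\,C<\infty \Leftrightarrow H_n(C,Y)=0$ uniformly for all $C$-bimodules $Y$.'' That equivalence is correct, but the step $\mathsf{gldim}\,C<\infty \Rightarrow \mathrm{pd}_{C^e}C<\infty$ is the nontrivial input (Eilenberg/Auslander), and it requires $C/\mathrm{rad}\,C$ to be separable over $k$ -- guaranteed here because $k$ is algebraically closed, but not by the Noetherianity and finite-generation you cite (those only give $\mathrm{pd}=\mathrm{fd}$, not the link to $\mathsf{gldim}$). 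You should invoke the separability of $C/\mathrm{rad}\,C$ explicitly. The trade-off: your argument is shorter and unifies the two items around the single Jacobi--Zariski isomorphism, but it leans on the algebraically-closed (or perfect) base-field hypothesis in a place the paper's more elementary resolution argument does not.
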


\begin{rema} For a split extension $A=B\oplus M$, it is trivial that if $H_*(A,A)$ vanishes in large enough degrees, then the same happens
for $H_*(B,B)$. Indeed,  $H_*(-,-)$ is a functor on the category of algebras. Hence $H_*(B,B)$ is a direct summand of $H_*(A,A)$.
\end{rema}

\begin{proof}
\begin{enumerate}
\item

Recall that $n$ is the index of $B$-nilpotency of $M$, and $u$ is the projective dimension of $M$ as a $B^e$-module. We claim that $H_*(A,A)$ and $H_*(B,B)$ are isomorphic if $*\geq nu+1$.

     Recall that by Corollary \ref{tensor nilpotent H vanish in large degrees} we have that $H_*(A|B,A)$ vanishes for $*\geq n$. Hence  the Jacobi-Zariski long exact sequence of Theorem \ref{JZ nosotros} shows that $H_*(B,A)$ and $H_*(A,A)$ are isomorphic for $*\geq nu+1$.

     On the other hand
    $H_*(B,A)= H_*(B,B)\oplus H_*(B,M)$. Moreover, $$H_*(B,M)=\Tor_*^{B^e}(B,M).$$
         Hence if $*\geq u$, then $H_*(B,M)=0$ and $H_*(B,A)= H_*(B,B)$.

\item

The bimodule $M$ is projective from at least  one side, we will suppose that $M$ is right projective.
First we prove that if $A$ is smooth then $B$ is smooth. Let $\pi:A\to B$ be the retraction algebra map of $B\subset A$, with kernel $M$. Let $Y$ be a right $B$-module, and let $\overline{Y}$ be the $A$-module obtained by restricting scalars through $\pi$. We have $Y=\overline{Y}$ as right $B$-modules and $\overline{Y}M=0$.

Let $P_*\to \overline{Y}$ be a finite right $A$-projective resolution of $\overline{Y}$. It remains of course exact when considering it as an exact sequence of right $B$-modules. Moreover, if $P$ is a right projective $A$-module then  it is also projective as a right $B$-module. Indeed, this is true for the free rank one $A$-module $B\oplus M$. Then the standard arguments enable to conclude.

To prove that if $B$ is smooth then so is $A$, we begin by proving that any induced $A$-module $Z=A\otimes_BY$ is of finite projective dimension.  Let $Q_*\to Y$ be a finite left $B$-projective resolution of $Y$. The functor $A\otimes_B-$ is exact since $A$ is right projective. Moreover if $Q$ is a left projective $B$-module, then $A\otimes_BQ$ is a left projective $A$-module,  this follows from the fact that
this is true for $Q=B$. Therefore $A\otimes_BQ_*\to A\otimes_BY$ is a finite left $A$-projective resolution of the induced module $Z$.

Let $X$ be a left $A$-module. We claim that there exists an exact sequence of $A$-modules
$0\to Z_n\to Z_{n-1}\to\cdots\to Z_0\to X\to 0$
where the $A$-modules $Z_i$ are induced. This claim ends the proof, indeed each $Z_i$ is of finite projective dimension by the previous assertion, hence $X$ is of finite projective dimension.

To prove the claim, consider the relative reduced bar resolution \ref{rrbr}, which is finite since $M$ is $B$-tensor nilpotent. Moreover
its contracting homotopy is a right $A$-module map, see Remark \ref{contraction homotopy A}. Consequently the relative reduced bar resolution remains exact by applying the functor $-\otimes_AX$. For some $n$ we obtain
$$0{\to}A\otimes_BM^{\otimes_Bn}\otimes_BX{\to}\cdots{\to}A\otimes_BM\otimes_BX{\to}A\otimes_BX
{\to} X\to 0.$$
Note that all the $A$-modules except $X$ are induced $A$-modules.\qed

    \end{enumerate}
\end{proof}

\footnotesize
\noindent C.C.:\\
Institut Montpelli\'{e}rain Alexander Grothendieck, CNRS, Univ. Montpellier, France.\\
{\tt Claude.Cibils@umontpellier.fr}

\medskip
\noindent M.L.:\\
Instituto de Matem\'atica y Estad\'\i stica  ``Rafael Laguardia'', Facultad de Ingenier\'\i a, Universidad de la Rep\'ublica, Uruguay.\\
{\tt marclan@fing.edu.uy}

\medskip
\noindent E.N.M.:\\
Departamento de Matem\'atica, IME-USP, Universidade de S\~ao Paulo, Brazil.\\
{\tt enmarcos@ime.usp.br}

\medskip
\noindent A.S.:
\\IMAS-CONICET y Departamento de Matem\'atica,
 Facultad de Ciencias Exactas y Naturales,\\
 Universidad de Buenos Aires, Argentina. \\{\tt asolotar@dm.uba.ar}

\end{document}